\documentclass[12pt]{amsart}

\textheight22.0cm
\textwidth16.5cm
\oddsidemargin0.3cm
\evensidemargin0.3cm
\setlength{\topmargin}{-1cm}

\usepackage{amsmath}
\usepackage{amsfonts}
\usepackage{amssymb,enumerate}
\usepackage{amsthm}
\usepackage{fancyhdr}
\usepackage{tikz}
\usetikzlibrary{arrows,matrix}
\usepackage{hyperref}

\newtheorem{lemma}{Lemma}
\newtheorem{corollary}[lemma]{Corollary}
\newtheorem{proposition}[lemma]{Proposition}
\newtheorem{theorem}[lemma]{Theorem}

\theoremstyle{definition}

\newtheorem{algorithm}[lemma]{Algorithm}

\numberwithin{lemma}{section}

\title[ ]{On the least positive solution to a proportionally modular Diophantine inequality}

\author{A. Moscariello}

\subjclass[2010]{11D75, 20M14}

\keywords{Diophantine inequality; proportionally modular numerical semigroups; multiplicity}

\address[A. Moscariello]{Dipartimento di Matematica e Informatica, \ Universit\`a di Catania, \  Viale Andrea Doria 6, 
95125 Catania,Italy}

\email{alessio.moscariello@studium.unict.it}

\bibliographystyle{amsplain}

\begin{document}

\begin{abstract}
Given three positive integers $a,b,c$, a proportionally modular Diophantine inequality is an expression of the form $ax \mod{b} \le cx$. Our aim is to give a recursive formula for the least solution to such an inequality. We  then use the formula to derive an algorithm. Finally, we apply our results to a question of Rosales and Garc\'ia-S\'anchez.

\end{abstract}

\maketitle

\section*{Introduction}
A \emph{proportionally modular Diophantine inequality} is an expression of the form $$ (ax \mod b) \le cx$$ where the positive integers $a,b,c$ are called respectively the \emph{factor}, \emph{modulus} and \emph{proportion}. It is well-known that the set of the non-negative integer solutions of this inequality is a \emph{numerical semigroup} (cf. \cite{RG}, \cite{RGU}), i.e. a submonoid $S$ of $(\mathbb{N},+)$ with finite complement in it.
Denoting by $S(a,b,c)$ the set of solutions, the structure of this set (called a \emph{proportionally modular semigroup}) has been widely studied, but is not completely understood yet. In particular, it is an open problem (cf. \cite{RG}) to find explicit formulas for several classical invariants of numerical semigroups.

In this paper we study the \emph{multiplicity} of a proportionally modular semigroup $S$, that is the smallest positive integer that belongs to $S$. Although some partial results are known (cf. \cite{RGU}, \cite{RU}, \cite{RV}) as of today the main problem of finding a formula for this invariant still remains unsolved. Using elementary number theory we will work on a recursive formula for the smallest positive solution of the more general inequality $$(ax \mod b) \le cx \ \  a,b \in \mathbb{Z}^+, \ c \in \mathbb{Q}^+$$.

Our work is structured as follows: in the first section prove our main theorem, which provides a recursive formula for the computation of the multiplicity of $S$. In Section 2 we describe the algorithm that can be derived from our main theorem. In the final section we explain how our result can be applied to a question of Rosales and Garc\'ia-S\'anchez (\cite[Open Problem 5.20]{RG}).

\section{Main Result}
Given two integers $m$ and $n$ with $n > 0$ we define the \emph{remainder operator} $[m]_n$ as follows   $$[m]_n=\min\{i \in \mathbb{N} \ \ |  \ i \equiv m \pmod n\}$$ Note that if $m$ and $n$ are positive integers such that $m < n$ then $m= [m]_n$. The following properties follow from the definition of floor and ceiling function, and we will use them extensively:
\begin{proposition}\label{mod}
Let $a,b \in \mathbb{Z}^+$. Then:
\begin{enumerate}
\item $\left \lfloor \dfrac{b}{a} \right \rfloor a + [b]_a =b$\\
\item $\left \lceil \dfrac{b}{a} \right \rceil a - [-b]_a =b$
\end{enumerate} 
\end{proposition}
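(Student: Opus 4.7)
The plan is to reduce both statements directly to the Division Algorithm. For part (1), I would write $b = qa + r$ with $0 \le r < a$ as guaranteed by the Division Algorithm. By definition of floor, $q = \lfloor b/a \rfloor$, and since $r \equiv b \pmod a$ with $0 \le r < a$, the minimality in the definition of $[\cdot]_a$ forces $r = [b]_a$. Substituting gives $\lfloor b/a \rfloor a + [b]_a = b$.

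For part (2), I would split on whether $a$ divides $b$. If $a \mid b$, then $\lceil b/a \rceil = b/a$ and $-b \equiv 0 \pmod a$ gives $[-b]_a = 0$, so the identity is immediate. If $a \nmid b$, then $\lceil b/a \rceil = \lfloor b/a \rfloor + 1$, and from part (1) we have $0 < [b]_a < a$, hence $a - [b]_a \in \{1, \dots, a-1\}$ is congruent to $-b$ modulo $a$, giving $[-b]_a = a - [b]_a$. Plugging into the left-hand side of (2),
\[
\lceil b/a \rceil a - [-b]_a = (\lfloor b/a \rfloor + 1)a - (a - [b]_a) = \lfloor b/a \rfloor a + [b]_a,
\]
which equals $b$ by part (1).

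There is no real obstacle here; the statement is essentially a bookkeeping identity that repackages the Division Algorithm in the notation $[\cdot]_a$. The only mild subtlety is being careful about the boundary case $a \mid b$ in part (2), where the ceiling does not increment, and verifying that $a - [b]_a$ genuinely lies in the valid range $\{0, 1, \dots, a-1\}$ required by the definition of the remainder operator when $a \nmid b$.
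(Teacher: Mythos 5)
Your proof is correct. The paper actually gives no proof of this proposition at all --- it simply asserts that both identities ``follow from the definition of floor and ceiling function'' --- and your division-algorithm argument is exactly the standard justification being implicitly invoked, including the careful handling of the boundary case $a \mid b$ in part (2), which is the only point where any care is needed.
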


Let $a,b \in \mathbb{Z}^+$ and let $c \in \mathbb{Q}^+$. Consider the inequality $(ax \mod b)=[ax]_b \le cx$, and define $$L(a,b,c)=\min \{x \in \mathbb{Z}^+ \ | \ [ax]_b \le cx \}=\min \{S(a,b,c) \setminus \{0\}\}$$ First of all, we note that if $a \ge b$ then $[ax]_b=[[a]_bx]_b$ and it follows that $L(a,b,c)=L([a]_b,b,c)$, so the condition $a < b$ that we will impose in the next results is not restrictive. Moreover, we have the following property:
\begin{lemma}\label{gcd}
Let $a,b \in \mathbb{Z}^+$, $c \in \mathbb{Q}^+$. Let $d \in \mathbb{Z}^+$ be such that $d \ | \ a$ and $d \ | \ b$. Then $L(a,b,c)=L\left(\frac{a}{d},\frac{b}{d},\frac{c}{d}\right)$.
\end{lemma}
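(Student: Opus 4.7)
The natural strategy is to show that the set of positive integer solutions is literally the same on both sides, so that the minima automatically coincide. For this I would establish the scaling identity
\[
[ax]_b \;=\; d\cdot\bigl[\tfrac{a}{d}x\bigr]_{b/d}
\]
for every positive integer $x$, and then divide the original inequality $[ax]_b \le cx$ through by $d$.

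The identity is the only real content. To prove it, I would write the division with remainder $ax = qb + r$ with $0 \le r < b$, so that $r = [ax]_b$ by definition. Since $d \mid a$ (hence $d \mid ax$) and $d \mid b$ (hence $d \mid qb$), it follows that $d \mid r$; write $r = d r'$ with $0 \le r' < b/d$. Dividing the whole equation by $d$ gives
\[
\tfrac{a}{d}x \;=\; q\cdot\tfrac{b}{d} + r',\qquad 0 \le r' < \tfrac{b}{d},
\]
which by uniqueness of the division with remainder forces $r' = \bigl[\tfrac{a}{d}x\bigr]_{b/d}$. Substituting back, $[ax]_b = r = dr' = d\bigl[\tfrac{a}{d}x\bigr]_{b/d}$, as required. (If one prefers to avoid introducing $q$, Proposition~\ref{mod}(1) gives $[ax]_b = ax - \lfloor ax/b \rfloor b = ax - \lfloor (a/d)x/(b/d)\rfloor b$, and dividing by $d$ yields the same conclusion.)

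With the identity in hand, the inequality $[ax]_b \le cx$ is equivalent to $d\bigl[\tfrac{a}{d}x\bigr]_{b/d} \le cx$, i.e.\ to $\bigl[\tfrac{a}{d}x\bigr]_{b/d} \le \tfrac{c}{d}x$. Therefore $S(a,b,c) = S(a/d, b/d, c/d)$ as subsets of $\mathbb{N}$, and taking the minimum positive element of both sides gives $L(a,b,c) = L(a/d, b/d, c/d)$.

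I do not expect any real obstacle here: the only step that requires care is checking that the quotient $r/d$ is indeed the correct remainder modulo $b/d$, i.e.\ that $0 \le r' < b/d$, which is immediate once one divides the inequality $0 \le r < b$ by $d$. Everything else is a direct translation between the two inequalities.
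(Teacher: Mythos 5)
Your proposal is correct and follows essentially the same route as the paper: the paper's proof also rests on the scaling identity $[ax]_b = d\,[\frac{a}{d}x]_{b/d}$ (which it states without the careful division-with-remainder justification you supply) and then divides the inequality by $d$. Your write-up simply fills in the verification of that identity, which the paper leaves implicit.
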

\begin{proof}
Denote $a=da'$ and $b=db'$. In our notation we have $[a]_b=d[a']_{b'}$, therefore $$[ax]_b \le cx \Longleftrightarrow [a']_{b'} \le \frac{c}{d}x$$ and the thesis follows.
\end{proof}
The following proposition gives us the value of $L(a,b,c)$ for two special cases:
\begin{proposition}

Let $a,b \in \mathbb{Z}^+$ be such that $a<b$, and let $c \in \mathbb{Q}^+$ be a positive rational number. Then:
\begin{enumerate}\label{base1}
\item If $c \ge a$ then $L(a,b,c)=1$.
\item If $c < a$ and $a \ | \ b$ then $L(a,b,c)=\frac{b}{a}$.
\end{enumerate}				
\end{proposition}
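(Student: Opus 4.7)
The plan is to verify both cases by direct substitution, using only the observation that for $0 < y < n$ one has $[y]_n = y$.

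For part (1), I would simply test $x=1$ in the inequality $[ax]_b \le cx$. Since $a < b$ by hypothesis, the remark just before Lemma~\ref{gcd} gives $[a\cdot 1]_b = [a]_b = a$. The inequality thus reduces to $a \le c$, which holds by assumption. Hence $1 \in S(a,b,c)$ and so $L(a,b,c)=1$.

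For part (2), write $b = ak$ with $k = b/a \in \mathbb{Z}^+$. I would first check that $x = k$ is a solution: $ax = ak = b$, so $[ax]_b = 0 \le ck$, confirming $k \in S(a,b,c)$. I would then rule out any smaller positive integer $x$ with $1 \le x \le k-1$. For such an $x$, we have $0 < ax \le a(k-1) = b - a < b$, so $[ax]_b = ax$; the inequality $[ax]_b \le cx$ becomes $ax \le cx$, i.e.\ $a \le c$, contradicting the hypothesis $c < a$. Therefore no positive integer smaller than $k = b/a$ lies in $S(a,b,c)$, and equality $L(a,b,c) = b/a$ follows.

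There is no real obstacle here; both statements are base cases whose proofs amount to evaluating $[ax]_b$ when $ax$ is already reduced modulo $b$. The only mild point worth being explicit about is the bound $0 < ax < b$ used in case (2), which is what lets us replace $[ax]_b$ by $ax$ and thereby reduce the problem to the scalar inequality $a \le c$.
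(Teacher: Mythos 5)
Your proof is correct and follows essentially the same route as the paper: part (1) by checking $x=1$ directly, and part (2) by verifying $x=b/a$ works while every smaller positive $x$ gives $[ax]_b=ax>cx$. You are merely a bit more explicit than the paper, which dismisses part (1) as obvious.
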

\begin{proof}
The first one is obvious. As for the second one, if $x<\frac{b}{a}$ then $ax < b$ and $[ax]_b=ax > cx$, so the inequality is false for $x < \frac{b}{a}$, while for $x=\frac{b}{a}$ we have $ax=b$ and $[ax]_b=0 \le cx$. We conclude that $L(a,b,c)=\frac{b}{a}$.
\end{proof}
With these premises we can reduce our problem to the case $c<a<b$, $a  \not | \ b$.
\begin{proposition}\label{l}
Let $a,b \in \mathbb{Z}^+$ and $c \in \mathbb{Q}^+$ be such that $c<a<b$ and $a \not | \ b$. Then there exists $\mu \in \mathbb{Z}^+$ such that $$L(a,b,c)=\left \lceil \frac{\mu b}{a} \right \rceil.$$
\end{proposition}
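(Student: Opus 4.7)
The plan is to start from $x := L(a,b,c)$ and identify the required $\mu$ as $\lceil ax/b \rceil$. By construction $(\mu-1)b < ax \le \mu b$. In the boundary case $ax = \mu b$ the equality $x = \mu b/a = \lceil \mu b/a \rceil$ already puts $x$ in the desired form. In the other case one has $\lfloor ax/b \rfloor = \mu - 1$, so $[ax]_b = ax - (\mu-1)b$, and the hypothesis that $x$ satisfies the inequality rewrites as $(a-c)x \le (\mu-1)b$. The strategy is then to replace $x$ by the (a priori smaller) candidate $y := \lceil (\mu-1)b/a \rceil$, show that $y$ is still a solution, and invoke minimality of $L(a,b,c)$ to conclude $x = y$.

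The two auxiliary facts I would pin down first are, on the one hand, that $x = 1$ is never a solution because $[a]_b = a > c$, so $L(a,b,c) \ge 2$; and, on the other hand, that Proposition~\ref{mod}(2) applied to $\mu' b$ and $a$ gives $a\lceil \mu' b/a \rceil = \mu' b + [-\mu' b]_a$. Since $[-\mu' b]_a < a < b$, this produces the clean identity $[a\lceil \mu' b/a \rceil]_b = [-\mu' b]_a$ for any positive integer $\mu'$. With these in hand, the condition that $y = \lceil (\mu-1)b/a \rceil$ solves the inequality simplifies to $(a-c)y \le (\mu-1)b$, which is entailed by $y \le x$ together with the analogous bound already known for $x$. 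The inequality $y \le x$ itself follows immediately from $(\mu-1)b < ax$, since $x$ is an integer and thus $\lceil (\mu-1)b/a \rceil \le x$.

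The main obstacle I anticipate is careful case management at the boundaries: ensuring $\mu \ge 2$ so that $\mu - 1$ qualifies as a positive integer in the conclusion. This is exactly where the hypotheses $c < a$ and $a \nmid b$ come in. In the strict case, if $\mu = 1$ then $(\mu-1)b = 0$ and the inequality $(a-c)x \le 0$ is impossible because $a > c$ and $x > 0$; in the equality case $ax = \mu b$ the value $\mu = 1$ would force $a \mid b$, contradicting $a \nmid b$. Once this bookkeeping is in place, the conclusion $x = \lceil (\mu - 1)b/a \rceil$ in the generic case, and $x = \lceil \mu b/a \rceil$ in the boundary case, is immediate.
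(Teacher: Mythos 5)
Your argument is correct, and it reaches the conclusion by a genuinely different mechanism than the paper. The paper brackets $L=L(a,b,c)$ between consecutive values $\left\lceil \mu b/a\right\rceil \le L < \left\lceil (\mu+1)b/a\right\rceil$, supposes $L=\left\lceil \mu b/a\right\rceil + r$ with $r\ge 1$, and performs a one-step descent: it verifies that $[aL]_b \ge a$ in that range, hence $[a(L-1)]_b=[aL]_b-a$, and since the left side of the defining inequality drops by $a$ while the right side drops only by $c<a$, the point $L-1$ is still a solution, contradicting minimality. You instead set $\mu=\left\lceil aL/b\right\rceil$, rewrite the solution condition off the boundary as the linear inequality $(a-c)L\le(\mu-1)b$, and jump directly to $y=\left\lceil (\mu-1)b/a\right\rceil$, for which the identity $[ay]_b=ay-(\mu-1)b=[-(\mu-1)b]_a<a<b$ shows that the same linear condition characterizes membership; that condition holds a fortiori because $y\le L$ and $a-c>0$, so minimality forces $y=L$. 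Both proofs manufacture a smaller solution and invoke minimality, but yours trades the paper's decrement-by-one and its slightly delicate verification that $[aL]_b\ge a$ for a single monotonicity observation, and it makes transparent why $L$ can only sit at the least element of a block on which $\left\lfloor ax/b\right\rfloor$ is constant. Your edge-case bookkeeping is sound: $\mu\ge 2$ in the strict case follows from $(a-c)x>0$ as you say; note only that in the boundary case $ax=\mu b$ the statement needs just $\mu\ge 1$, so the appeal to $a\nmid b$ there is superfluous (and indeed that case cannot occur with $\mu=1$ anyway).
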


\begin{proof}
If $x < \left \lceil \frac{b}{a} \right \rceil$ then $ax < b$ and $[ax]_b=ax>cx$, so $L(a,b,c) \ge \left \lceil \frac{b}{a} \right \rceil$. From this bound it follows that there exists $\mu \in \mathbb{Z}^+$ such that $$\left \lceil \frac{\mu b}{a} \right \rceil \le L(a,b,c) < \left \lceil \frac{(\mu+1) b}{a} \right \rceil$$ Suppose now that $L(a,b,c) \neq  \left \lceil \frac{\mu b}{a} \right \rceil$, that is equivalent to saying that there exists $r \in \mathbb{N}$, $r \neq 0$ such that $$L(a,b,c) = \left \lceil \frac{\mu b}{a} \right \rceil+r \ \ \ \ \ \text{     with } r < \left \lceil \frac{(\mu+1) b}{a} \right \rceil - \left \lceil \dfrac{\mu b}{a} \right \rceil$$ Therefore $$aL(a,b,c) = a\left \lceil \frac{\mu b}{a} \right \rceil + ar < a\left \lceil \frac{(\mu+1) b}{a} \right \rceil \Longrightarrow [aL(a,b,c)]_b \ge a$$ and since $b > [aL(a,b,c)]_b-a \ge 0$ we obtain $[aL(a,b,c)]_b-a=[aL(a,b,c)-a]_b$.  After the position $x=L(a,b,c)-1$ we have that $$[ax]_b=[a(L(a,b,c)-1)]_b=[aL(a,b,c)-a]_b=[aL(a,b,c)]_b-a$$ and $cx=cL(a,b,c)-c$. Hence we have $$[ax]_b=[aL(a,b,c)]_b-a < [aL(a,b,c)]_b-c \le cL(a,b,c)-c=cx$$ leading to $x=L(a,b,c)-1 \in S(a,b,c)$, which is a contradiction.
\end{proof}
Note that by definition it's clear that $L(a,b,c) \le b$, hence $1 \le \mu \le a$.  Define $R_\mu$ as the only positive integer such that $$\frac{(R_\mu-1) a}{[b]_a} < \mu \le  \frac{R_\mu a}{[b]_a}.$$
\begin{lemma}\label{valori}
Let $a,b \in \mathbb{Z}^+$ and $c \in \mathbb{Q}^+$ be such that $c < a < b$ and $a \not | \ b$. Let $\mu \in \mathbb{Z}^+$ be such that $L(a,b,c)=\left \lceil \frac{\mu b}{a} \right \rceil$. Then we have: 
\begin{enumerate}
\item $L(a,b,c)=\left \lceil \frac{\mu b}{a} \right \rceil=\mu\left \lfloor \frac{b}{a} \right \rfloor+R_\mu$
\item $[aL(a,b,c)]_b=R_\mu a -\mu[b]_a$
\end{enumerate}
\end{lemma}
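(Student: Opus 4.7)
The plan is to unfold both claims directly from Proposition \ref{mod} and the defining inequality for $R_\mu$; no clever idea is needed, only careful bookkeeping.

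For part (1), I would first use Proposition \ref{mod}(1) to write $b = \lfloor b/a\rfloor\, a + [b]_a$, so that
\[
\frac{\mu b}{a} \;=\; \mu \left\lfloor \frac{b}{a} \right\rfloor \,+\, \frac{\mu [b]_a}{a}.
\]
Since the first summand is an integer, the ceiling distributes:
\[
\left\lceil \frac{\mu b}{a} \right\rceil \;=\; \mu\left\lfloor \frac{b}{a} \right\rfloor \,+\, \left\lceil \frac{\mu [b]_a}{a} \right\rceil.
\]
It then suffices to identify $\lceil \mu [b]_a / a \rceil$ with $R_\mu$. But the defining inequality $\frac{(R_\mu - 1)a}{[b]_a} < \mu \le \frac{R_\mu a}{[b]_a}$ is, after clearing denominators, exactly $R_\mu - 1 < \mu [b]_a / a \le R_\mu$, which is the statement that $\lceil \mu [b]_a / a\rceil = R_\mu$. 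This gives (1).

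For part (2), starting from (1), I would compute
\[
a L(a,b,c) \;=\; a\mu \left\lfloor \frac{b}{a} \right\rfloor + a R_\mu,
\]
and replace $a \lfloor b/a\rfloor$ by $b - [b]_a$ (again by Proposition \ref{mod}(1)) to obtain
\[
a L(a,b,c) \;=\; \mu b \,+\, \bigl(a R_\mu - \mu [b]_a\bigr).
\]
Modulo $b$ this is congruent to $aR_\mu - \mu [b]_a$, so the only remaining issue is to check that this residue already lies in the interval $[0,b)$, which would identify it with $[a L(a,b,c)]_b$. Nonnegativity follows from the right-hand side of the $R_\mu$ inequality ($\mu [b]_a \le R_\mu a$), and the strict upper bound follows from the left-hand side: $aR_\mu - \mu [b]_a < aR_\mu - (R_\mu - 1)a = a$, which is less than $b$ by hypothesis. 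This establishes (2).

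The whole argument is essentially algebraic manipulation; the only place where one must be attentive is the identification of $\lceil \mu[b]_a/a\rceil$ with $R_\mu$, which requires reading the definition of $R_\mu$ in the right direction, and the verification that $aR_\mu - \mu[b]_a$ falls in $[0,a) \subseteq [0,b)$ so that it genuinely is the canonical residue rather than just a representative of the correct class.
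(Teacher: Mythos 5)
Your proof is correct and follows essentially the same route as the paper: decompose $b$ via Proposition \ref{mod}, pull the integer summand out of the ceiling to identify $R_\mu=\left\lceil \mu[b]_a/a\right\rceil$, then reduce $aL(a,b,c)$ modulo $b$ and check that the representative $R_\mu a-\mu[b]_a$ lies in $[0,b)$. If anything your upper bound is the more careful one: you derive $aR_\mu-\mu[b]_a<a<b$ from the left-hand defining inequality of $R_\mu$, whereas the paper asserts the stronger bound $\le[b]_a$, which can actually fail (e.g.\ $a=5$, $b=7$, $\mu=R_\mu=1$ gives $3>2=[b]_a$), though only the bound $<b$ is needed for the conclusion.
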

\begin{proof}
\begin{enumerate}
\item By using Proposition \ref{mod} we have that $b=\left \lfloor \frac{b}{a} \right \rfloor a + [b]_a$, and then $$L(a,b,c)=\left \lceil \frac{\mu b}{a} \right \rceil=\left \lceil \frac{\mu \left(\left \lfloor \frac{b}{a} \right \rfloor a +[b]_a \right)}{a} \right \rceil=\left \lceil \mu \left \lfloor \frac{b}{a} \right \rfloor  +\frac{\mu [b]_a}{a} \right \rceil$$
Since $\mu \left \lfloor \frac{b}{a} \right \rfloor \in \mathbb{Z}^+$ we can deduce easily  from the definition of $R_\mu$ that $R_\mu=\left \lceil \frac{\mu [b]_a}{a} \right \rceil$. Then it follows that: $$L(a,b,c)= \left \lceil \mu \left \lfloor \frac{b}{a} \right \rfloor  +\frac{\mu [b]_a}{a} \right \rceil=\mu \left \lfloor \frac{b}{a} \right \rfloor  +R_\mu.$$
\item By the first part of this lemma and the identity $b-[b]_a=\left \lfloor \frac{b}{a} \right \rfloor a$ of Proposition \ref{mod} we have $$[aL(a,b,c)]_b=\left[ \mu\left \lfloor \frac{b}{a} \right \rfloor a+R_\mu a \right]_b=\left[ \mu b - \mu [b]_a+R_\mu a \right]_b=\left[ R_\mu a - \mu [b]_a \right]_b.$$ 
But by definition of $R_\mu$ we have $0 \le R_\mu-\mu[b]_a \le [b]_a < b$ and consequently $\left[ R_\mu a - \mu [b]_a \right]_b=R_\mu a - \mu [b]_a$, that is our thesis.
\end{enumerate}
\end{proof}

In order to find a recursion we'll prove that $R_\mu$ itself is the smallest solution of another proportionally modular Diophantine inequality with smaller values of factor, modulus and proportion, and then we'll compute $\mu$ from $R_{\mu}$:
\begin{theorem}\label{main}
Let $a,b \in \mathbb{Z}^+$, $c \in \mathbb{Q}^+$ be such that $c<a<b$ and $a \not | \ b$. Let $\mu \in \mathbb{Z}^+$ be such that $L(a,b,c)=\left \lceil \frac{\mu b}{a} \right \rceil$. Then

$$R_\mu=L\Bigg([a]_{[b]_a},[b]_a,\frac{cb}{c\left \lfloor \frac{b}{a} \right \rfloor+[b]_a}\Bigg) \ \ \ \
\mu=\left \lceil \frac{R_\mu (a-c)}{c\left \lfloor \frac{b}{a} \right \rfloor+[b]_a} \right \rceil$$
\end{theorem}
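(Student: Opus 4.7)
The plan is to translate the membership $L \in S(a,b,c)$ into a condition on the pair $(\mu, R_\mu)$ provided by Lemma \ref{valori} and then decouple by parametrizing over $R = R_\mu$, reducing the problem to finding the smallest $R$ that solves a smaller proportionally modular inequality. Substituting $L = \mu\lfloor b/a\rfloor + R_\mu$ and $[aL]_b = R_\mu a - \mu[b]_a$ into $[aL]_b \le cL$ gives, after rearrangement,
\[R_\mu(a-c) \le \mu\, g, \qquad g := c\lfloor b/a\rfloor + [b]_a.\]
Because $a \nmid b$ makes $\mu \mapsto \lceil\mu b/a\rceil$ strictly increasing, the $\mu$ of Proposition \ref{l} is the smallest positive integer satisfying this inequality together with the defining relation $(R_\mu-1)a/[b]_a < \mu \le R_\mu a/[b]_a$.

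For a fixed positive integer $R$, the $\mu$ with $R_\mu = R$ are exactly the integers in $[\mu_R, M_R]$, where $\mu_R := \lfloor(R-1)a/[b]_a\rfloor + 1$ and $M_R := \lfloor Ra/[b]_a\rfloor$; hence $R$ is \emph{achievable} (i.e.\ corresponds to some valid $\mu$) iff $\lceil R(a-c)/g\rceil \le M_R$. The key algebraic step is the identity $a - c' = (a-c)[b]_a/g$ for $c' := cb/g$, easily obtained from $b = \lfloor b/a\rfloor a + [b]_a$. Combined with $[aR]_{[b]_a} = aR - M_R[b]_a$, it yields
\[[aR]_{[b]_a} \le c'R \;\iff\; (a-c')R \le M_R[b]_a \;\iff\; \lceil R(a-c)/g\rceil \le M_R,\]
so the achievable $R$'s coincide with the solutions of the modified inequality. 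Since $\mu_R$ is strictly increasing in $R$, the function $R \mapsto \max\{\mu_R, \lceil R(a-c)/g\rceil\}\lfloor b/a\rfloor + R$ is strictly increasing on the achievable $R$'s, and the minimum of $L$ is therefore attained at the smallest achievable $R$. This yields $R_\mu = L([a]_{[b]_a},[b]_a,c')$.

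For the second identity, the optimal pair gives $\mu = \max\{\mu_{R_\mu}, \lceil R_\mu(a-c)/g\rceil\}$, so it remains to show the ceiling dominates. I plan to argue by contradiction: if $\mu_{R_\mu} > \lceil R_\mu(a-c)/g\rceil$, then $\mu' := \mu_{R_\mu} - 1 = M_{R_\mu - 1}$ satisfies $R_{\mu'} = R_\mu - 1$ and $\mu' g \ge R_\mu(a-c) > (R_\mu - 1)(a-c)$, so $R_\mu - 1$ would already be achievable, contradicting the minimality of $R_\mu$ just established. The main obstacle in this plan is the rounding bookkeeping: the identity $a - c' = (a-c)[b]_a/g$ must land exactly on the right side of the defining inequality $[aR]_{[b]_a} \le c'R$, and the ceiling/floor equivalence must be genuine in both directions. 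Once those verifications are in place, the rest is formal.
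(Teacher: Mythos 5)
Your proposal is correct and follows essentially the same route as the paper: both substitute the formulas of Lemma \ref{valori} into $[aL(a,b,c)]_b \le cL(a,b,c)$ to obtain the two-sided constraint $\frac{R_\mu(a-c)}{c\lfloor b/a\rfloor + [b]_a} \le \mu \le \frac{R_\mu a}{[b]_a}$, characterize the least admissible $R$ as the least solution of the new proportionally modular inequality via the same algebraic manipulation (your identity $a-c' = (a-c)[b]_a/g$ is just a repackaging of the paper's computation), and read off $\mu$ as the ceiling of the left endpoint. Your bookkeeping of the minimality transfer (the explicit blocks $[\mu_R, M_R]$, the monotonicity check, and the contradiction argument for the second identity) is somewhat more careful than the paper's terser ``smallest integer in the interval'' reasoning, but the substance is identical.
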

\begin{proof}
Using Lemma \ref{valori} we have that $cL(a,b,c)=c\mu \left \lfloor \frac{b}{a} \right \rfloor+R_\mu c$ and $[aL(a,b,c)]_b=R_\mu a - \mu[b]_a$. Then, from $cL(a,b,c) \ge [aL(a,b,c)]_b$ it follows
$$cL(a,b,c) \ge [aL(a,b,c)]_b \Longrightarrow c\mu\left \lfloor \frac{b}{a} \right \rfloor+R_\mu c \ge R_\mu a - \mu[b]_a \Longrightarrow$$ $$\mu\left(c\left \lfloor \frac{b}{a} \right \rfloor+[b]_a \right)\ge R_\mu (a-c) \Longrightarrow \mu \ge \frac{ R_\mu (a-c)}{c\left \lfloor \frac{b}{a} \right \rfloor+[b]_a }.$$ By definition of $R_{\mu}$ we get $\mu \le  \frac{R_\mu a}{[b]_a}$ and by merging these parts we obtain 
\begin{equation}\label{small}
\frac{ R_\mu (a-c)}{c\left \lfloor \frac{b}{a} \right \rfloor+[b]_a} \le \mu \le  \frac{R_\mu a}{[b]_a}.
\end{equation} 
Since $L(a,b,c)$ is the smallest integer for which the inequality is verified, then it follows that $R_{\mu}$ must be the smallest integer such that the interval defined by the two sides of inequuality (\ref{small}) contains an integer . Furthermore, by definition $\mu$ is the smallest integer in such an interval, so we obtain 

\begin{equation}\label{Rmu}
R_\mu=\min \left\{z \in \mathbb{Z}^+ \ | \ \left[ \frac{z(a-c)}{c\left \lfloor \frac{b}{a} \right \rfloor+[b]_a} , \frac{za}{[b]_a} \right] \cap \mathbb{N} \neq \emptyset  \right\}.
\end{equation} and 
\begin{equation}\label{mu}
\mu=\min\left\{ \left[ \frac{R_{\mu}(a-c)}{c\left \lfloor \frac{b}{a} \right \rfloor+[b]_a} ,\frac{R_{\mu}a}{[b]_a} \right] \cap \mathbb{N} \right \}=\left \lceil \frac{R_\mu (a-c)}{c\left \lfloor \frac{b}{a} \right \rfloor+[b]_a} \right \rceil .
\end{equation} 
The second part of our thesis is proved in (\ref{mu}).
For the first one, we can easily see that $$\left[ \frac{z(a-c)}{c\left \lfloor \frac{b}{a} \right \rfloor+[b]_a} , \frac{za}{[b]_a} \right] \cap \mathbb{N} \neq \emptyset \Longleftrightarrow \left \lfloor \frac{za}{[b]_a}\right \rfloor \ge \frac{z(a-c)}{c\left \lfloor \frac{b}{a} \right \rfloor+[b]_a}.$$
By Proposition \ref{mod} we get the two identities  $\left \lfloor \frac{za}{[b]_a}\right \rfloor=\frac{za-[za]_{[b]_a}}{[b]_a}$ and $\left \lfloor \frac{b}{a}\right \rfloor=\frac{b-[b]_{a}}{a}$. Therefore $$\left \lfloor \frac{za}{[b]_a}\right \rfloor \ge z\frac{a-c}{c\left \lfloor \frac{b}{a} \right \rfloor+[b]_a} \Longleftrightarrow \frac{za-[za]_{[b]_a}}{[b]_a} \ge z\frac{a-c}{c\left \lfloor \frac{b}{a} \right \rfloor+[b]_a} \Longleftrightarrow $$ $$ \Longleftrightarrow z\left(\frac{a}{[b]_a}-\frac{a-c}{c\left \lfloor \frac{b}{a} \right \rfloor+[b]_a}\right) = z\left(\frac{ac\left \lfloor \frac{b}{a} \right \rfloor+c[b]_a}{[b]_a\left(c\left \lfloor \frac{b}{a} \right \rfloor+[b]_a\right)}\right) \ge \frac{[za]_{[b]_a}}{[b]_a} \Longleftrightarrow z\left(\frac{cb}{c\left \lfloor \frac{b}{a} \right \rfloor+[b]_a}\right) \ge [za]_{[b]_a}.$$
Now we put this condition in Eq. (\ref{Rmu}): $$R_\mu= \min \left\{z \in \mathbb{Z}^+ \ | \ z\left(\frac{cb}{c\left \lfloor \frac{b}{a} \right \rfloor+[b]_a}\right) \ge [za]_{[b]_a}  \right\}=L \left([a]_{[b]_a},[b]_a,\frac{cb}{c\left \lfloor \frac{b}{a} \right \rfloor+[b]_a}\right)$$ and the thesis is thus proved. 
\end{proof}
Combining Proposition \ref{l} and Theorem \ref{main} we obtain a recursive formula for $L(a,b,c)$:
\begin{corollary}\label{corollary}
Let $a,b \in \mathbb{Z}^+$, $c \in \mathbb{Q}^+$ be such that $c<a<b$ and $a \not | \ b$. Then $$L(a,b,c)= \left \lceil \left \lceil  \frac{L_1 (a-c)}{c\left \lfloor \frac{b}{a} \right \rfloor+[b]_a} \right \rceil \frac{b}{a} \right \rceil \hspace{0.75cm} \text{where} \hspace{0.25cm}L_1=L \left([a]_{[b]_a},[b]_a,\frac{cb}{c\left \lfloor \frac{b}{a} \right \rfloor+[b]_a}\right)$$
\end{corollary}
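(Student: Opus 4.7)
The plan is to simply compose the two previous results. By Proposition \ref{l}, under the hypotheses $c<a<b$ and $a\nmid b$, there exists a positive integer $\mu$ with
\[
L(a,b,c)=\left\lceil \frac{\mu b}{a}\right\rceil,
\]
so it suffices to identify $\mu$ explicitly in terms of the data $(a,b,c)$.

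For that identification I would invoke Theorem \ref{main}, which gives the pair of formulas
\[
R_\mu = L\!\left([a]_{[b]_a},[b]_a,\frac{cb}{c\lfloor b/a\rfloor+[b]_a}\right), \qquad \mu=\left\lceil \frac{R_\mu(a-c)}{c\lfloor b/a\rfloor+[b]_a}\right\rceil.
\]
The first of these is, by definition, exactly $L_1$. Substituting $R_\mu=L_1$ into the second formula yields $\mu=\left\lceil L_1(a-c)/(c\lfloor b/a\rfloor+[b]_a)\right\rceil$, and plugging this value of $\mu$ into $L(a,b,c)=\lceil \mu b/a\rceil$ gives the claimed expression.

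There is no genuine obstacle here: the work has already been done in Proposition \ref{l} (which reduces $L(a,b,c)$ to a ceiling of the form $\lceil \mu b/a\rceil$) and in Theorem \ref{main} (which identifies $\mu$ recursively). The corollary is a purely formal concatenation of the two statements, so the proof should consist of one sentence citing Proposition \ref{l}, one sentence citing Theorem \ref{main}, and a direct substitution.
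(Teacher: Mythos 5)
Your proposal is correct and matches the paper exactly: the paper introduces this corollary with the single phrase ``Combining Proposition \ref{l} and Theorem \ref{main}'' and offers no further argument, which is precisely your one-line concatenation of the two results with $R_\mu = L_1$ substituted into the formula for $\mu$. Nothing more is needed.
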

\section{The Algorithm}
The main result of the previous section gives rise to the following algorithm to compute $L(a,b,c)$ for any given triple $(a,b,c)$ with $a,b \in \mathbb{Z}^+$ and $c \in \mathbb{Q}^+$.
\begin{algorithm}\label{algoritmo}{\ }
\begin{description}
\item [\textbf{Input }] The values $a,b \in \mathbb{Z}^+$, $c \in \mathbb{Q}$ (condition: $a<b$).
\item [\textbf{Output}] The value $L(a,b,c)=\min \{x \in \mathbb{Z}^+ \ | \ [ax]_b \le cx \}$.
\item [\textbf{Instructions}]
\end{description}
\begin{enumerate}
\item If $c \ge a$ then return $L(a,b,c)=1$, else go to step 2.
\item If $a \ | \ b$ then return $L(a,b,c)=\frac{b}{a}$, else go to step 3.
\item Make the positions $a=[a]_{[b]_a}$, $b=[b]_a$ and $c=\dfrac{cb}{c\left \lfloor \frac{b}{a} \right \rfloor+[b]_a}$, then go to step 1.
\item Call $L_1$ the output of step 4.
\item Compute $$L(a,b,c)= \left \lceil \left \lceil  \frac{L_1 (a-c)}{c\left \lfloor \frac{b}{a} \right \rfloor+[b]_a} \right \rceil \frac{b}{a} \right \rceil.$$
\item Return $L(a,b,c)$.

\end{enumerate}
\end{algorithm}

We now show that this algorithm terminates. In fact: 
\begin{proposition}
Algorithm \ref{algoritmo} stops after a finite number of steps.
\end{proposition}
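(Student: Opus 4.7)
The plan is to exhibit a strictly decreasing monovariant on the positive integers attached to the recursive calls. Termination of the algorithm can happen only inside step 1 (when $c \ge a$) or step 2 (when $a \mid b$); otherwise step 3 replaces the triple $(a,b,c)$ by
\[
(a',b',c') := \Bigl([a]_{[b]_a},\; [b]_a,\; \tfrac{cb}{c\lfloor b/a \rfloor + [b]_a}\Bigr)
\]
and returns to step 1. So I need to argue that after finitely many executions of step 3 we must land in step 1 or step 2.

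First I would check that the precondition $a < b$ is preserved, so that the recursive call is legitimate: by definition $a' = [a]_{b'} < b'$. Next I would observe that throughout the recursion $b$ stays a positive integer, since the only way $b' = [b]_a$ could fail to be positive is if $a \mid b$, but then step 2 would have already halted the procedure.

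The key step is to show that $b$ strictly decreases. In any iteration that reaches step 3 we have $a < b$ and $a \nmid b$, hence
\[
1 \le b' = [b]_a < a < b.
\]
Thus $b$ is a strictly decreasing sequence of positive integers along the chain of recursive calls, which can only have finitely many terms. Hence after a bounded number of iterations either $c \ge a$ or $a \mid b$ must hold, and the algorithm halts in step 1 or step 2.

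There is no real obstacle here: the monovariant is transparent once one notices that the recursive substitution $b \mapsto [b]_a$ is exactly the Euclidean-type remainder step, and the condition $a \nmid b$ guarding entry into step 3 is precisely what prevents $[b]_a$ from being zero. The only mild point worth writing out is confirming that the precondition $a < b$ propagates into the recursive call, so that Proposition \ref{base1} and Theorem \ref{main} apply at each level.
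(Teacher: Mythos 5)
Your proof is correct, and it rests on the same core idea as the paper's: the substitution in step 3 is a Euclidean remainder step, so some positive-integer parameter attached to the recursive calls strictly decreases. The only real difference is the choice of monovariant: you track the modulus, using $1 \le [b]_a < a < b$, while the paper tracks the factor, showing $a_{i+1} < a_i$ whenever $a_i \ge 2$ and then invoking the auxiliary observation that $c_i \ge 1$ for $i \ge 1$ to conclude that the halting condition $c_i \ge a_i$ is eventually reached. Your version is marginally more self-contained, since it needs no information about how $c$ evolves: finiteness of the descent alone forces one of the two halting tests in steps 1 and 2 to succeed.
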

\begin{proof}
Consider the three sequences of integers  $a_i$, $b_i$ and $c_i$ defined recursively as 
\begin{equation*}
b_i=\left\{ 
\begin{array}{ll}
b_0=b & \\
b_i=[b_{i-1}]_{a_{i-1}} & \text{if } i>0
\end{array}
\right.
\end{equation*}
\begin{equation*}
a_i=\left\{ 
\begin{array}{ll}
a_0=a & \\
a_i=[a_{i-1}]_{b_{i}} & \text{if } i>0
\end{array}
\right.
\end{equation*}
\begin{eqnarray*}
c_i=\left\{ 
\begin{array}{ll}
c_0=c & \\
c_i=\dfrac{c_{i-1}b_{i-1}}{c_{i-1}\left \lfloor \frac{b_{i-1}}{a_{i-1}} \right \rfloor+[b_{i-1}]_{a_{i-1}}} & \text{if } i>0
\end{array}
\right.
\end{eqnarray*}

Since it's obvious that $a_{i+1} < a_i$ if $a_i \ge 2$ and that $c_i \ge 1$ for any $i \ge 1$ therefore it follows immediately that after a finite number of steps we will have $a_i \le 1$, hence $c_i \ge a_i$ thus meeting one condition for termination. 

\end{proof}
\section{Applications}
The given algorithm has an application in the context of numerical semigroups. Given two coprime integers $a_1$ and $a_2$ we consider the numerical semigroup generated by them, that is $$S=\langle a_1,a_2 \rangle=\{\lambda_1a_1+\lambda_2a_2 \ | \ \lambda_1,\lambda_2 \in \mathbb{N}\}.$$ We define the \emph{quotient} of a numerical semigroup $S$ by a positive integer $d$ as follows:  $$\frac{S}{d}:=\{x \in \mathbb{N} \ | \ xd \in S \}.$$ The quotient $\frac{S}{d}$ is a numerical semigroup, but it does not have necessarily the same structure as $S$: little is known about the existence of a relation between the invariants of $S$ and $\frac{S}{d}$. In particular, given three positive integers $a_1,a_2,d$, it's an open problem (cf. \cite[Open Problem 5.20]{RG}) to find a formula for the smallest multiple of $d$ that belongs to $\langle a_1,a_2 \rangle$ and for the largest multiple of $d$ that does not belong to $\langle a_1,a_2 \rangle$, that actually are invariants of the quotient $\frac{\langle a_1,a_2 \rangle}{d}$. The class of quotients of numerical semigroups is tightly related to the Diophantine inequalities we have studied, as it was shown that a numerical semigroup is proportionally modular if and only if is the quotient of an embedding dimension two numerical semigroup. In particular we know that $\langle a_1,a_2 \rangle$ is proportionally modular, as the next result shows:  
\begin{lemma}[\protect{\cite[Lemma 18]{RV}}]
Let $a_1,a_2$ be relatively prime positive integers and let $u$ be a positive integer such that $ua_2 \equiv 1 \pmod{a_1}$. Then $$\langle a_1,a_2 \rangle = \{ x \in \mathbb{N} \ | \ [ua_2x]_{(a_1a_2)} \le x \}.$$
\end{lemma}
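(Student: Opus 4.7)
The plan is to reduce the condition $[ua_2 x]_{a_1 a_2} \le x$ to a membership criterion for $\langle a_1, a_2 \rangle$ via an explicit Chinese Remainder Theorem computation. Write $m = ua_2$, and observe that the hypothesis says $m \equiv 1 \pmod{a_1}$, while trivially $m \equiv 0 \pmod{a_2}$. Multiplying by $x$, we get $mx \equiv x \pmod{a_1}$ and $mx \equiv 0 \pmod{a_2}$. Since $\gcd(a_1,a_2) = 1$, CRT determines $[mx]_{a_1 a_2}$ uniquely from these two congruences.

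Next I would write $[mx]_{a_1 a_2} = r a_2$ with $0 \le r < a_1$ (forced by the congruence mod $a_2$), and determine $r$ using the other congruence: $r a_2 \equiv x \pmod{a_1}$, which after multiplying by $u$ and using $ua_2 \equiv 1 \pmod{a_1}$ gives $r \equiv ux \pmod{a_1}$, hence $r = [ux]_{a_1}$. So the key identity is
\[
[ua_2 x]_{a_1 a_2} \;=\; [ux]_{a_1}\, a_2.
\]
Therefore the inequality $[ua_2 x]_{a_1 a_2} \le x$ is equivalent to $[ux]_{a_1}\, a_2 \le x$.

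It then remains to prove that the last condition characterizes membership in $\langle a_1, a_2 \rangle$. For the easy direction, if $[ux]_{a_1}\, a_2 \le x$, then $x - [ux]_{a_1}\, a_2$ is a non-negative integer that is $\equiv x - ux \cdot a_2 \equiv 0 \pmod{a_1}$, so it is a non-negative multiple of $a_1$; writing $x = \lambda_1 a_1 + [ux]_{a_1}\, a_2$ exhibits $x \in \langle a_1, a_2 \rangle$. Conversely, if $x = \lambda_1 a_1 + \lambda_2 a_2$ with $\lambda_i \in \mathbb{N}$, then $\lambda_2 a_2 \equiv x \pmod{a_1}$, so $\lambda_2 \equiv ux \pmod{a_1}$, forcing $\lambda_2 \ge [ux]_{a_1}$ since $[ux]_{a_1}$ is the least non-negative representative of that congruence class; hence $x \ge \lambda_2 a_2 \ge [ux]_{a_1}\, a_2$.

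There is no serious obstacle: the only nontrivial step is recognizing that the CRT computation collapses $[ua_2 x]_{a_1 a_2}$ to $[ux]_{a_1}\, a_2$. Once this identity is in hand, both inclusions are a one-line manipulation using that the smallest non-negative integer congruent to $ux$ modulo $a_1$ is precisely $[ux]_{a_1}$.
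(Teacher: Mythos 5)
Your proof is correct. Note that the paper itself offers no proof of this lemma --- it is imported verbatim as \cite[Lemma 18]{RV} --- so there is no internal argument to compare against; your write-up is a valid, self-contained derivation. The key identity you extract via the Chinese Remainder Theorem, namely $[ua_2x]_{a_1a_2}=[ux]_{a_1}\,a_2$, is exactly the reduction the paper performs immediately after stating the lemma (there obtained by invoking Lemma \ref{gcd} with $d=a_2$ to pass from $[ua_2x]_{a_1a_2}\le x$ to $[ux]_{a_1}\le x/a_2$), so your computation also re-proves that displayed consequence directly. Each step checks out: the congruences $mx\equiv x\pmod{a_1}$ and $mx\equiv 0\pmod{a_2}$ pin down the residue as $r a_2$ with $r=[ux]_{a_1}$; the forward inclusion follows since $x-[ux]_{a_1}a_2$ is a non-negative multiple of $a_1$; and the reverse inclusion follows from minimality of $[ux]_{a_1}$ in its residue class, which forces $\lambda_2\ge [ux]_{a_1}$ and hence $x\ge \lambda_2 a_2\ge [ux]_{a_1}a_2$.
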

\noindent By Lemma \ref{gcd} we immediately obtain $$\langle a_1,a_2 \rangle =\left \{ x \in \mathbb{N} \ | \ [ux]_{a_1} \le \frac{x}{a_2} \right \}.$$
Consider now the quotient $$\frac{\langle a_1,a_2 \rangle}{d} = \{x \in \mathbb{N} \ | \ xd \in \langle a_1,a_2 \rangle \}= \left\{ x \in \mathbb{N} \ | \ [uxd]_{a_1} \le \frac{xd}{a_2} \right \}$$ whose multiplicity is
$$	m \left(\frac{\langle a_1,a_2 \rangle}{d}\right) =\min \left\{ x \in \mathbb{N} \ | \ [uxd]_{a_1} \le \frac{xd}{a_2} \right\}=L\left([ud]_{a_1},a_1,\frac{d}{a_2}\right)$$
and therefore it can be obtained by applying Algorithm \ref{algoritmo}.\par The second application regards the set $S(a,b,c)$ itself. Since this set is a numerical semigroup, it has finite complement in $\mathbb{N}$; the greatest integer not belonging to $S(a,b,c)$ is called the \emph{Frobenius number} of $S(a,b,c)$, that we will denote here with $F(a,b,c)$. In \cite{RV2} the authors give a relation between $F(a,b,1)$ and the multiplicity of a particular proportionally modular numerical semigroup. For this purpose we fix the following notation.\\  Given $p,q \in \mathbb{Q}^+$ such that $p<q$ denote by $[p,q]$ and $\langle [p,q] \rangle$ the sets $$[p,q]=\{x \in \mathbb{Q} \ | \ p \le x \le q\}$$
and $$\langle[p,q]\rangle=\{\lambda_1a_1 + \lambda_2a_2+\ldots+\lambda_na_n | \ \lambda_1,\ldots,\lambda_n \in \mathbb{N}, \ \ a_1,\ldots,a_n \in [p,q] \ \ n \in \mathbb{N}\setminus \{0\}\}$$ It is known that for any $p,q \in \mathbb{Q}^+$ such that $p<q$ the set $S([p,q])=\langle [p,q] \rangle \cap \mathbb{N}$ is a proportionally modular numerical  semigroup, as the next proposition shows:
\begin{proposition}[\protect{\cite[Proposition 1]{RV2}}]\label{ratio}
Let $a_1,b_1,a_2,b_2 \in \mathbb{Z}^+$ be such that $\frac{b_1}{a_1} < \frac{b_2}{a_2}$. Then $S([\frac{b_1}{a_1},\frac{b_2}{a_2}])=S(a_1b_2,b_1b_2,a_1b_2-a_2b_1).$
\end{proposition}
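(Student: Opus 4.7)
The plan is to reduce both sides of the claimed equality to the same explicit arithmetic condition on an integer $x$, and then verify the equivalence directly using Proposition \ref{mod}.

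The first step is a standard reformulation of $\langle [p,q]\rangle \cap \mathbb{N}$. Given rationals $p<q$, I would show that a non-negative integer $x$ lies in $\langle[p,q]\rangle$ if and only if there exists $n \in \mathbb{N}$ such that $np \le x \le nq$. One direction follows by setting $n = \sum \lambda_i$ in the expression $x = \sum \lambda_i a_i$, which gives $np \le x \le nq$; the converse is immediate, since for such an $n$ we may write $x = n \cdot (x/n)$ with $x/n \in [p,q]$. Applying this with $p = b_1/a_1$ and $q = b_2/a_2$, and clearing denominators, the membership $x \in S([b_1/a_1, b_2/a_2])$ becomes equivalent to the existence of an integer $n$ with $\frac{xa_2}{b_2} \le n \le \frac{xa_1}{b_1}$, i.e.\ to
\[
\left\lceil \frac{xa_2}{b_2} \right\rceil \le \left\lfloor \frac{xa_1}{b_1} \right\rfloor.
\]

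The second step is to unpack the right-hand side $S(a_1b_2, b_1b_2, a_1b_2 - a_2b_1)$. I would use Proposition \ref{mod}(1) with $m = a_1 b_2 x$ and $n = b_1 b_2$: since $\lfloor a_1 b_2 x / (b_1 b_2)\rfloor = \lfloor a_1 x / b_1 \rfloor$, this yields
\[
[a_1 b_2 x]_{b_1 b_2} = a_1 b_2 x - b_1 b_2 \left\lfloor \frac{a_1 x}{b_1}\right\rfloor.
\]
Substituting this into the inequality $[a_1 b_2 x]_{b_1 b_2} \le (a_1 b_2 - a_2 b_1)x$ and simplifying reduces it (after cancelling $a_1 b_2 x$ and dividing by the positive integer $b_1$) to
\[
\left\lfloor \frac{a_1 x}{b_1}\right\rfloor \ge \frac{a_2 x}{b_2}.
\]

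The third step closes the loop: since the left side is an integer, the inequality $\lfloor a_1 x/b_1\rfloor \ge a_2 x/b_2$ is equivalent to $\lfloor a_1 x/b_1\rfloor \ge \lceil a_2 x/b_2\rceil$, which is exactly the condition obtained in the first step. Hence the two sets coincide. The only delicate point — which I would flag as the main (though still minor) obstacle — is making sure that replacing $a_2 x/b_2$ by $\lceil a_2 x / b_2\rceil$ in this final comparison is done cleanly, using only that the other side is an integer; beyond that, the argument is a short chain of equivalences.
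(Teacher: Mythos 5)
The paper does not prove this statement: it is imported verbatim as \cite[Proposition 1]{RV2}, so there is no internal proof to compare against. That said, your argument is correct and self-contained, and it is essentially the standard derivation from the literature: characterize $S([p,q])$ by the existence of $n\in\mathbb{N}$ with $np\le x\le nq$ (which is exactly what the sum $\sum\lambda_i$ trick gives), then rewrite $[a_1b_2x]_{b_1b_2}\le(a_1b_2-a_2b_1)x$ via $[m]_n=m-n\lfloor m/n\rfloor$ and observe both conditions collapse to $\bigl\lceil a_2x/b_2\bigr\rceil\le\bigl\lfloor a_1x/b_1\bigr\rfloor$. The only blemishes are cosmetic: the cancellation step requires dividing by $b_1b_2$, not just $b_1$; the hypothesis $b_1/a_1<b_2/a_2$ should be invoked explicitly both to guarantee that the interval $[xa_2/b_2,\,xa_1/b_1]$ is correctly oriented and that the proportion $a_1b_2-a_2b_1$ is a positive integer; and the case $x=0$ (empty sum, $n=0$) deserves one line. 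None of these affects the validity of the proof.
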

A direct consequence of Proposition \ref{ratio} is that $m(S([\frac{b_1}{a_1},\frac{b_2}{a_2}]))=L(a_1b_2,b_1b_2,a_1b_2-a_2b_1)$. Furthermore, note that Lemma \ref{gcd} allows us to divide each term by $b_2$, hence obtaining
\begin{equation}\label{m}
m\left(S\left(\left[\frac{b_1}{a_1},\frac{b_2}{a_2}\right]\right)\right)=L\left(a_1,b_1,\frac{a_1b_2-a_2b_1}{b_1}\right)
\end{equation}   
\begin{theorem}[\protect{\cite[Theorem 18]{RV2}}]\label{frob}
Let $a,b \in \mathbb{Z}^+$ be such that $2 \le a < b$ and $S=S([\frac{2b^2+1}{2ab},\frac{2b^2-1}{2b(a-1)}])$. Then $F(a,b,1)=b-m(S)$.
\end{theorem}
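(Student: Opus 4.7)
The plan is to prove the theorem by establishing, for every integer $n$ with $1 \le n \le b-1$, the equivalence
\[
n \in S \;\iff\; [a(b-n)]_b > b - n,
\]
so that the positive elements of $S$ below $b$ correspond bijectively, via $n \mapsto b - n$, to the gaps of $S(a,b,1)$. Granted this, the theorem follows immediately: the hypothesis $2 \le a < b$ gives $1 \notin S(a,b,1)$ (since $[a]_b = a > 1$) and $b \in S(a,b,1)$ (since $[ab]_b = 0$), so $F := F(a,b,1)$ lies in $\{1, \ldots, b-1\}$; applying the equivalence at $n = b - F$ yields $b - F \in S$, hence $m(S) \le b - F$, and at any $1 \le n < b - F$ (so $b - n > F$ is not a gap) it yields $n \notin S$, hence $m(S) \ge b - F$.

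I would prove the equivalence by handling both sides through the division $an = qb + r$ with $r = [an]_b$ and $q = \lfloor an/b \rfloor$. For the right-hand side, Proposition~\ref{mod} gives $a(b-n) = (a - q)b - r$, so $[a(b-n)]_b = b - r$ if $r > 0$ and $0$ if $r = 0$; in either case the inequality $[a(b-n)]_b > b - n$ reduces to the single condition $0 < r < n$. For the left-hand side, Proposition~\ref{ratio} rephrases $n \in S$ as the existence of $k \in \mathbb{Z}^+$ with $\frac{2b^2+1}{2ab} \le \frac{n}{k} \le \frac{2b^2-1}{2b(a-1)}$. Because $I := [\frac{2b^2+1}{2ab},\frac{2b^2-1}{2b(a-1)}] \subset (b/a,\ b/(a-1))$, any such $k$ must lie in the open interval $((a-1)n/b,\ an/b)$, whose length $n/b$ is strictly less than one; so there is at most one candidate, which an elementary check identifies as $k = q$, existing only when $0 < r < n$. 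Substituting $k = q$ into the two endpoint inequalities of $I$ and simplifying with $an = qb + r$ turns them into $2br \ge q$ and $2b(n-r) \ge q$, i.e.\ $\min(r,\, n-r) \ge \lceil q/(2b) \rceil$.

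The main obstacle is closing the distance between this refined condition and the simpler $0 < r < n$ coming from the coarser interval $(b/a,\ b/(a-1))$, i.e.\ showing that the shrinkage to $I$ does not discard any admissible fraction $n/q$. The crucial estimate is $q \le \lfloor a(b-1)/b \rfloor \le a - 1 < b \le 2b$, which gives $\lceil q/(2b) \rceil \in \{0, 1\}$; and the assumption $0 < r < n$ forces $q \ge 1$ (otherwise $q = 0$ would give $r = an \ge 2n > n$), so $\lceil q/(2b) \rceil = 1$ and the refined condition collapses exactly to $\min(r, n - r) \ge 1$, i.e.\ $0 < r < n$. This is the point at which the specific perturbations $\frac{1}{2ab}$ and $\frac{1}{2b(a-1)}$ defining the endpoints of $I$ earn their keep: they are large enough to cut off the limit fractions $b/a$ and $b/(a-1)$, yet too small to affect any useful fraction $n/q$ with $n < b$. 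The remaining manipulations are routine bookkeeping with the remainder operator.
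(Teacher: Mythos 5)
The paper offers no proof of this statement: it is imported verbatim from \cite[Theorem 18]{RV2}, so there is no internal argument to compare yours against. That said, your blind proof is correct and self-contained, and it checks out in detail. The reduction of the theorem to the duality ``$n\in S \iff b-n\notin S(a,b,1)$'' for $1\le n\le b-1$ is sound, since $1\notin S(a,b,1)$ (as $[a]_b=a\ge 2>1$) and every $x\ge b$ lies in $S(a,b,1)$, so $F\in\{1,\dots,b-1\}$ and the duality pins down $m(S)=b-F$. The two computations at the heart of the equivalence are right: writing $an=qb+r$, one gets $[a(b-n)]_b=[-r]_b$, so the gap condition is exactly $0<r<n$; and the membership $n\in S$ forces the unique candidate weight $k=q$ (the interval $\left(\frac{(a-1)n}{b},\frac{an}{b}\right)$ has length $n/b<1$ and sits in $(q-1,q+1)$), with the endpoint inequalities of $I$ simplifying to $q\le 2br$ and $q\le 2b(n-r)$, which collapse to $0<r<n$ thanks to $q\le a-1<2b$. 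That last estimate is indeed the crux --- it is what makes the perturbations $\frac{1}{2ab}$ and $\frac{1}{2b(a-1)}$ harmless for all fractions $n/q$ with $n<b$ while excluding the endpoints $b/a$ and $b/(a-1)$. Two cosmetic slips: the identity $a(b-n)=(a-q)b-r$ is plain algebra, not Proposition~\ref{mod}; and the characterization ``$n\in S$ iff $\exists k\in\mathbb{Z}^+$ with $\alpha\le n/k\le\beta$'' follows from the definition of $\langle[p,q]\rangle$ (a sum of $k$ elements of $[p,q]$ lies in $[kp,kq]$ and conversely), not from Proposition~\ref{ratio}, which states the proportionally modular presentation instead. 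Neither affects the validity of the argument.
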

By Theorem \ref{frob} and Eq. (\ref{m}) we have $$F(a,b,1)=b-m(S)=b-L\left(2b,2b^2+1,\frac{4b^3-4ab+2b}{2b^2+1}\right)$$ and thus we can apply Algorithm \ref{algoritmo}.

\end{document}